\documentclass[10pt,a4paper]{amsart}

\textwidth13,5cm

\usepackage{amssymb,amsbsy,amsmath,amsfonts,amssymb,amscd, mathrsfs}

\usepackage{enumerate}

\usepackage{color}
\usepackage[english]{babel}
\usepackage[T1]{fontenc}
\usepackage{indentfirst}
\makeatletter
\@addtoreset{equation}{section}
\makeatother

\newtheorem{statement}{}[section]
\newtheorem{definition}[statement]{Definition}
\newtheorem{theorem}[statement]{Theorem}

\newtheorem{corol}[statement]{Corollary}
\newtheorem{lemma}[statement]{Lemma}

\def\span{\text{\rm span}}

\newcommand{\C}{\mathbb C}

\newcommand{\R}{\mathbb R}

\newcommand\e{{\rm e}}
\newcommand{\eps}{\varepsilon}
\newcommand\ind{{\rm 1\kern-.30em I}}

\newcommand\dis{\displaystyle}

\newcommand{\biindice}[3]%
{%

\begin{array}[t]{c}
{\displaystyle #1}\\
{\scriptstyle #2}\\
{\scriptstyle #3}
\end{array}

}

\begin{document}

\title{\bf $L^2$-M\"untz spaces as model spaces.}

\author[Fricain]{Emmanuel Fricain}
\address{Laboratoire Paul Painlev\'e, Universit\'e Lille 1, 59 655 Villeneuve d'Ascq, France }
\email{emmanuel.fricain@math.univ-lille1.fr}

\author[Lef\`evre]{Pascal Lef\`evre}
\address{Laboratoire de Math\'ematiques de Lens, Universit\'e d'Artois, Rue Jean Souvraz, 62307 Lens, France}
\email{pascal.lefevre@univ-artois.fr}

\date{ \footnotesize \today}

\maketitle

\bigskip

 {\bf Abstract.} {\it We emphasize a bridge between two areas of function theory: hilbertian M\"untz spaces and model spaces of the Hardy space of the right half plane. We give miscellaneous applications of this viewpoint to hilbertian M\"untz spaces.}
\medskip

\noindent{\bf Mathematics Subject Classification.} Primary: 46E22, 30H10, 30B10\par\medskip

\noindent{\bf Key-words.} Model spaces, M\"untz spaces, Mellin transform.

\section{Introduction} 

For quite a long time, many mathematicians paid a lot of attention to the theory of model spaces: there is a wide literature on the subject (see for instance Nikolski treatise \cite{N} and the recent book of Garcia-Mashreghi-Ross \cite{GMR}). These two monographs contain many references on this rich topic. On the other hand, the M\"untz-Sz\'asz theorem (see \cite{Muntz} for the real case and \cite{Sz} for the complex case, or \cite{Al} for a good survey) gives an answer to a very natural question on the extension of the Weierstrass theorem in approximation theory. More recently, people were interested in another aspect of spaces spanned by monomials in the non dense case: what is their geometry (from a Banach space point of view) and how behave their operators ? One of the main reference on the subject is the monograph \cite{GuLu} for the state of art until '05. See also for instance \cite{AHLM}, \cite{CFT}, \cite{NT} or \cite{GL} for more recent papers.

\medskip

In the sequel, given $a\in\R$, we denote $\C_a=\{z\in\C\,|\; \Re(z)>a\}$, and we shall use the monomials $\e_{\lambda}:t\in(0,1]\mapsto t^\lambda=\e^{\lambda\ln(t)}$ for $\lambda\in\C_{-\frac{1}{2}}$. Clearly, $\e_\lambda$ belongs to $L^2([0,1],dx)$.

\medskip

In this paper, we are interested in a ``dictionary'' between particular model spaces and hilbertian M\"untz spaces. Actually, it turns out that this dictionary appears under a slightely different form in \cite[vol.2, chap.4]{N2}. As a first consequence, as noted in \cite{N2}, this tool allows to recover very quickly the M\"untz-Sz\'asz theorem of density in $L^2([0,1],dx)$ of the space spanned by the monomials $\e_{\lambda_n}$ for a suitable sequence $(\lambda_n)$ of complex numbers. The spirit of this argument is already underlying in a classical proof of the M\"untz theorem for continuous functions, coming back to Feinerman and Newman \cite{FN}, and popularized in the monograph of Rudin \cite{Ru}. Nevertheless, beyond this first application to the  M\"untz-Sz\'asz theorem, we wish to emphasize  the interest of this bridge between two classical areas of function theory, which have many other applications. We give some of them in this paper, but there are many potential others.
\medskip

\begin{definition}
The Hardy space of the right-half plane $\mathcal H^2(\mathbb C_0)$ consists of functions $f$ analytic on 
$\mathbb C_0$ satisfying 
$$
\|f\|_2=\sup_{x>0}\left(\int_{\mathbb R}|f(x+iy)|^2\,\frac{dy}{2\pi}\right)^{1/2}<\infty.
$$
\end{definition}
It is well-known that  $\mathcal H^2(\mathbb C_0)$ can be viewed as a closed subspace of $L^2(i\mathbb R)$ and it is a reproducing kernel Hilbert space whose kernel at point $\lambda\in\mathbb C_0$ is given by 
$$
k_\lambda(z)=\frac{1}{z+\overline{\lambda}},\qquad z\in\mathbb C_0.
$$ 
Given a sequence $\Lambda=(\lambda_n)_{n\geq 1}$ of (distinct) points in $\mathbb C_0$, we recall that the sequence $(k_{\lambda_n})_{n\geq 1}$ is not complete in $\mathcal H^2(\mathbb C_0)$ (which means that it generates a proper closed subspace of $\mathcal H^2(\mathbb C_0)$) if and only if $\Lambda$ satisfies the Blaschke condition, that is  
$$
\sum_{n}\frac{\Re(\lambda_n)}{|\lambda_n|^2+1}<\infty.
$$
In that case, if $B_\Lambda$ denotes the Blaschke product associated to $\Lambda$, we know that the sequence $(k_{\lambda_n})_{n\geq 1}$ is minimal ({\sl i.e.} any vector belongs to the closed subspace generated by the other vectors) and the closed subspace generated by $(k_{\lambda_n})_{n\geq 1}$ is
\begin{equation}\label{eq:completude-nr}
\span(k_{\lambda_n}:n\geq 1)=K_{B_\Lambda}=(B_{\Lambda}\mathcal H^2(\mathbb C_0))^\perp.
\end{equation}

The space $K_{B_\Lambda}$ is a particular case of subspaces $K_\Theta=(\Theta \mathcal H^2(\mathbb C_0))^\perp$, where $\Theta$ is an inner function. These spaces are also called model spaces since their analogue in the Hardy space of the unit disc are involved through the theory of Sz.-Nagy--Foias in the model theory for Hilbert space contractions. Note that $K_\Theta$ (as a closed subspace of $H^2(\mathbb C_0)$) is also a reproducing kernel Hilbert space whose kernel at point $\lambda\in\mathbb C_0$ is given by 
$$
k_\lambda^\Theta(z)=\frac{1-\overline{\Theta(\lambda)}\Theta(z)}{z+\overline{\lambda}},\qquad z\in\mathbb C_0.
$$

As mentioned above, there exists a huge literature on these spaces. We shall only mention here two results concerning the properties of bases of sequences of normalized reproducing kernels, which we will 
use in our paper. Recall that if $(x_n)_{n\geq 1}$ is a minimal and complete sequence of a Hilbert space $\mathcal H$, the sequence $(x_n)_{n\geq 1}$ is called a \emph{Riesz basis} for $\mathcal H$ if there exists two constants $c_1,c_2>0$ such that 
$$
c_1 \left( \sum_{n\geq 1}|a_n|^2\right)^{1/2}\leq \left\|\sum_{n\geq 1}a_n x_n\right\|_{\mathcal H}  \leq c_2 \left( \sum_{n\geq 1}|a_n|^2\right)^{1/2},
$$
for every finitely supported sequence of complex numbers $(a_n)_n$. It is called an \emph{asymptotically orthonormal basis} for $\mathcal H$ if there exists a sequence $(\varepsilon_N)_N$ tending to $0$ and satisfying 
 $$
(1-\varepsilon_N) \left( \sum_{n\geq N\geq 1}|a_n|^2\right)^{1/2}\leq \left\|\sum_{n\geq N}a_n x_n\right\|_{\mathcal H}  \leq (1+\varepsilon_N) \left( \sum_{n\geq N}|a_n|^2\right)^{1/2},
$$
for every finitely supported sequence of complex numbers $(a_n)_n$. 
\begin{theorem}[Volberg \&  Nikolski-Pavlov]\label{NikolskiPavlovVolberg}
Let $\Lambda=(\mu_n)_{n\geq 1}$ be a Blaschke sequence of distinct points of $\mathbb C_0$, let  $B_\Lambda$ be the associated Blaschke product and denote by $x_n=k_{\mu_n}/\|k_{\mu_n}\|_2$, $n\geq 1$ the normalized reproducing kernel.
\begin{enumerate}[(1)]
\item The sequence $(x_n)_{n\geq 1}$ is a Riesz basis for $K_{B_\Lambda}$ if and only if the sequence $\Lambda$ satisfies the so-called Carleson condition, that is
\begin{equation}\label{eq:Carleson-Condition}
\inf_n \prod_{k\neq n}\left|\frac{\mu_n-\mu_k}{\mu_n+\overline{\mu_k}}\right|>0.
\end{equation}
\item The sequence $(x_n)_{n\geq 1}$ is an asymptotically orthonormal basis for $K_{B_\Lambda}$ if and only if the sequence $\Lambda$ is a thin sequence, that is
\begin{equation}\label{eq:thin}
\lim_{n\to \infty} \prod_{k\neq n}\left|\frac{\mu_n-\mu_k}{\mu_n+\overline{\mu_k}}\right|=1.
\end{equation}

\end{enumerate}
\end{theorem}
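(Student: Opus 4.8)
The plan is to reduce both statements to classical facts about interpolating sequences and to the explicit geometry of the reproducing kernels, using the identification $\overline{\span}(x_n:n\ge1)=K_{B_\Lambda}$ recorded in \eqref{eq:completude-nr}. First I would record the two elementary computations that drive everything: from $k_\lambda(z)=1/(z+\overline\lambda)$ one gets $\|k_{\mu_n}\|_2^2=k_{\mu_n}(\mu_n)=1/(2\Re\mu_n)$, so that $x_n=\sqrt{2\Re\mu_n}\,k_{\mu_n}$, and for $n\neq m$
\[
|\langle x_n,x_m\rangle|^2=\frac{4\Re\mu_n\,\Re\mu_m}{|\mu_n+\overline{\mu_m}|^2}=1-\left|\frac{\mu_n-\mu_m}{\mu_n+\overline{\mu_m}}\right|^2=1-\rho(\mu_n,\mu_m)^2,
\]
where $\rho$ denotes the pseudohyperbolic distance of $\mathbb C_0$. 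Thus the off-diagonal entries of the Gram matrix of $(x_n)$ are governed by the very quantities appearing in \eqref{eq:Carleson-Condition} and \eqref{eq:thin}.

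For (1) I would use the standard duality between Riesz sequences and interpolation. Let $V:\ell^2\to\mathcal H^2(\mathbb C_0)$ be the synthesis operator $Va=\sum_n a_nx_n$; then $(x_n)$ is a Riesz basis of its closed span exactly when $V$ is bounded and bounded below, equivalently when $V^*f=(\langle f,x_n\rangle)_n=(\sqrt{2\Re\mu_n}\,f(\mu_n))_n$ is bounded and surjective onto $\ell^2$. Surjectivity of $V^*$ is precisely the assertion that $\Lambda$ is a free interpolating sequence for $\mathcal H^2(\mathbb C_0)$, and by Carleson's interpolation theorem (transported from the disc by a Cayley transform, together with the Shapiro--Shields equivalence between $H^\infty$- and $\mathcal H^2$-interpolation) this holds if and only if $\Lambda$ satisfies the Carleson condition \eqref{eq:Carleson-Condition}. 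Since by \eqref{eq:completude-nr} the closed span is $K_{B_\Lambda}$, this is exactly the desired equivalence.

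For (2) I would translate thinness into an operator statement about the tail Gram matrices. Writing $p_n=\prod_{k\neq n}\rho(\mu_n,\mu_k)$, an elementary comparison of $\log(1-x)$ with $-x$ shows that \eqref{eq:thin}, i.e.\ $p_n\to1$, is equivalent to $\sum_{k\neq n}|\langle x_n,x_k\rangle|^2\to0$ as $n\to\infty$. Denoting by $G_N=(\langle x_j,x_k\rangle)_{j,k\ge N}$ the tail Gram matrix, one has $\bigl\|\sum_{n\ge N}a_nx_n\bigr\|^2=\langle G_Na,a\rangle$, so the asymptotically orthonormal basis inequalities are equivalent to $\|G_N-I\|\to0$. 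If $\Lambda$ is thin, the displayed decay bounds the $\ell^2$-norm of each row of $G_N-I$ uniformly in $n\ge N$, and a Schur-test estimate then yields $\|G_N-I\|\to0$, giving the AOB property. Conversely, if $(x_n)$ is an AOB, then $\|G_N-I\|\to0$ forces $\sup_{n\ge N}\sum_{k\ge N,\,k\neq n}|\langle x_n,x_k\rangle|^2\to0$; combined with the fact that a Blaschke sequence of distinct points has no accumulation point inside $\mathbb C_0$, whence $\rho(\mu_n,\mu_k)\to1$ for each fixed $k$ and the finitely many low-index terms also vanish, this recovers the full decay and hence \eqref{eq:thin}.

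The main obstacle in (1) is concentrated entirely in the surjectivity (sufficiency) half of Carleson's interpolation theorem, whose proof requires the construction of bounded interpolating functions and is the one genuinely deep input; the Riesz-basis/interpolation duality and the norm computations are soft. In (2) the delicate point is the exact passage between the multiplicative condition \eqref{eq:thin} and additive, operator-norm control of the off-diagonal Gram entries --- and in particular the converse direction, where one must upgrade control of the tail correlations to control of \emph{all} pairwise correlations, using the weak decay of the normalized kernels toward the boundary.
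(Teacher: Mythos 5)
The paper does not actually prove Theorem~\ref{NikolskiPavlovVolberg}: it is quoted as known, with part (1) attributed to Nikolski--Pavlov \cite[p.135]{N} and part (2) to Volberg \cite{V} (with the alternative proof of \cite{GCPW}). So your attempt must be judged on its own. Your treatment of part (1) is the standard, correct route: Riesz basis of the closed span $\Longleftrightarrow$ the synthesis operator is an isomorphism $\Longleftrightarrow$ the restriction map $f\mapsto(\sqrt{2\Re\mu_n}\,f(\mu_n))_n$ is bounded and onto $\ell^2$ $\Longleftrightarrow$ the Carleson condition \eqref{eq:Carleson-Condition}, via Shapiro--Shields and Carleson's interpolation theorem; combined with \eqref{eq:completude-nr} this is exactly the Nikolski--Pavlov statement, and you correctly isolate Carleson's theorem as the one deep input. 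Your ``easy'' direction of (2) (AOB $\Rightarrow$ thin) is also sound: $\|G_N-I\|\to0$ controls the tail rows in $\ell^2$, and for fixed $k$ one has $\rho(\mu_n,\mu_k)\to1$ because a Blaschke sequence accumulates only on $i\mathbb R\cup\{\infty\}$.

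The genuine gap is in the forward direction of (2), thin $\Rightarrow$ AOB, at the words ``a Schur-test estimate then yields $\|G_N-I\|\to0$''. The Schur test bounds the operator norm of a self-adjoint matrix $E=(e_{nk})$ by $\sup_n\sum_k|e_{nk}|$, i.e.\ by \emph{$\ell^1$} row sums; there is no such bound in terms of $\ell^2$ row norms $\sup_n\bigl(\sum_k|e_{nk}|^2\bigr)^{1/2}$, which is all that thinness gives you. Concretely, $E=\frac1n(J_n-I_n)$, with $J_n$ the all-ones $n\times n$ matrix, has every row of $\ell^2$-norm less than $1/\sqrt n\to0$ while $\|E\|=(n-1)/n\to1$; so ``small $\ell^2$ rows $\Rightarrow$ small norm'' is false for abstract Gram perturbations, and thinness does not supply $\ell^1$ control: for complex thin Blaschke sequences the sums $\sum_{k\neq n}\bigl(1-\rho(\mu_n,\mu_k)^2\bigr)^{1/2}$ need not be small (one can arrange blocks of many points, pairwise at comparable pseudohyperbolic distances, pushed toward the boundary, for which these sums blow up while $\sum_{k\neq n}(1-\rho^2)\to0$). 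Hence any proof must use the specific function-theoretic structure of the kernels (in particular the phases of $\langle x_n,x_k\rangle$, which produce cancellation), and this is precisely what Volberg's argument and the Gram--Schmidt argument of \cite{GCPW} do; it is also why the present paper calls this ``the difficult result of Volberg'' and proves it elementarily only for real exponents --- there the inner products are positive and super-lacunarity does give the $\ell^1$/Schur bound $\sup_l\sum_{k\neq l}q_kq_l/(q_k^2+q_l^2)\le 2/(r_n-1)$, which is exactly the step that fails in the general complex case.
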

The part (1) is a result due to Nikolski-Pavlov \cite[p.135]{N}. The second part (2) is due to Volberg \cite{V} (see also a more elementary proof due to Gorkin, McCarthy, Pott and Wick in \cite{GCPW}).

\section{The dictionary}

We consider the following map (Mellin transform):

$${\mathcal M}:\left |\begin{array}{cccl}

 & L^2\big([0,1],\frac{ds}{s}\big) & \longrightarrow & {\mathcal H}^2\big(\C_0\big)\cr
& f &\longmapsto & {\mathcal M}(f)(z)=\dis\int_0^1f(s)s^{z-1}\,ds 
\end{array} \right.$$

The key of our viewpoint is that the map ${\mathcal M}$ is an isometric isomorphism. This is part of folklore and actually a reformulation of the Paley-Wiener theorem (cf \cite[p. 354]{Ru}), but for sake of completeness, we include here the argument. Indeed, for every function $g$ in the Hardy space ${\mathcal H}^2\big(\C_0\big)$, 
thanks to the theorem of Paley-Wiener, there exists a unique function $F$ in $L^2\big(\R^+\big)$ such that 
$$\forall z\in\C_0\, , \qquad g(z)=\int_{\mathbb R^+}F(t)\e^{-tz}\,dt\quad\hbox{and  } \|F\|_{2}=\|g\|_{{\mathcal H}^2({\mathbb\C}_0)}\,.$$

It means that the function $\dis f(s)=F\big(-\ln(s)\big)$ (equivalently $F(t)=f\big(\e^{-t}\big)$) satisfies $g={\mathcal M}(f)$ and 
$$\int_0^1|f(s)|^2\,\frac{ds}{s}=\int_{\mathbb R^+}\big|F(t)\big|^2\,dt=\|g\|^2_{{\mathcal H}^2({\mathbb\C}_0)}\; ,$$
which was our claim.
\medskip

Now, using the fact that the following map $$f\in L^2\big([0,1],ds\big)\longmapsto \sqrt s\cdot  f \in L^2\big([0,1],\frac{ds}{s}\big)$$
is also an isometric isomorphism, we get immediately

\begin{theorem}\label{dico}
The map $$  {\mathcal D}:\left |\begin{array}{cccl}

 & L^2\big([0,1],ds\big) & \longrightarrow & {\mathcal H}^2\big(\C_0\big)\cr
& f &\longmapsto & {\mathcal M}(\sqrt s f)(z)=\dis\int_0^1f(s)s^{z-\frac{1}{2}}\,ds
\end{array} \right.$$

defines an isometric isomorphism.
\end{theorem}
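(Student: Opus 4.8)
The plan is to recognize $\mathcal{D}$ as the composition of two maps already shown to be isometric isomorphisms, after which the statement follows from the elementary fact that a composition of isometric isomorphisms is again one.

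First I would introduce the map $J\colon L^2\big([0,1],ds\big)\to L^2\big([0,1],\frac{ds}{s}\big)$, $f\mapsto \sqrt s\,f$. The excerpt records that $J$ is an isometric isomorphism; the isometry is the direct computation
$$\|Jf\|^2_{L^2(ds/s)}=\int_0^1|\sqrt s\,f(s)|^2\,\frac{ds}{s}=\int_0^1|f(s)|^2\,ds=\|f\|^2_{L^2(ds)},$$
and $J$ is bijective since its inverse is $g\mapsto g/\sqrt s$. Likewise, the Mellin transform $\mathcal{M}\colon L^2\big([0,1],\frac{ds}{s}\big)\to\mathcal{H}^2(\C_0)$ was shown, via the Paley--Wiener argument above, to be an isometric isomorphism.

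Next I would verify that $\mathcal{D}=\mathcal{M}\circ J$ by unwinding the definitions: for $f\in L^2\big([0,1],ds\big)$ and $z\in\C_0$,
$$(\mathcal{M}\circ J)(f)(z)=\mathcal{M}(\sqrt s\,f)(z)=\int_0^1\sqrt s\,f(s)\,s^{z-1}\,ds=\int_0^1 f(s)\,s^{z-\frac12}\,ds=\mathcal{D}(f)(z).$$
Since both $\mathcal{M}$ and $J$ are isometric isomorphisms, so is their composition $\mathcal{D}$: norms are preserved because $\|\mathcal{D}f\|=\|\mathcal{M}(Jf)\|=\|Jf\|=\|f\|$, and $\mathcal{D}$ is a bijection as a composite of bijections. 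There is essentially no obstacle in this last step — the genuine content, namely the Paley--Wiener identification of $\mathcal{M}$, has already been carried out above, and Theorem \ref{dico} simply repackages the two isometries into the single, more convenient normalization on $L^2\big([0,1],ds\big)$.
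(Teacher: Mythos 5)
Your proof is correct and follows exactly the paper's own route: the paper also obtains Theorem~\ref{dico} by composing the Paley--Wiener identification of $\mathcal{M}$ on $L^2\big([0,1],\frac{ds}{s}\big)$ with the isometric isomorphism $f\mapsto\sqrt{s}\,f$. You merely make explicit the elementary verifications (the isometry of $f\mapsto\sqrt{s}\,f$ and the identity $\mathcal{D}=\mathcal{M}\circ J$) that the paper leaves implicit with its ``we get immediately.''
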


Let us point out that for every $\lambda\in\C_{-\frac{1}{2}}$, we have 
\begin{equation}\label{eq:monome-nr}
\forall z\in\C_0\, ,\qquad {\mathcal D}\big(\e_\lambda\big)(z)=\frac{1}{z+\lambda+\frac{1}{2}}=k_{\bar\lambda+\frac{1}{2}}(z)\; .
\end{equation}

\bigskip

The first immediate application we would like to mention is that we get the classical full M\"untz theorem for (quite) free (see \cite[Ex.4.7.2., vol.2]{N2} too). 

Let $\Lambda=\big(\lambda_n\big)_{n\ge1}\subset\C_{-\frac{1}{2}}$, we denote by $M_\Lambda$ the (vector) space spanned by the $\e_\lambda$ when $\lambda$ runs over $\Lambda$.

\begin{theorem}{(Full M\"untz theorem in $L^2$)}\label{szasz}
Let $\Lambda=(\lambda_n)$ be a sequence of $\C_{-\frac{1}{2}}$. Then

\centerline{$M_\Lambda$ is dense in $L^2\big([0,1],dx\big)$ if and only if $\quad\dis\sum\frac{\frac{1}{2}+\Re(\lambda_n)}{\big|\lambda_n+\frac{1}{2}\big|^2+1}=+\infty$.}

\end{theorem}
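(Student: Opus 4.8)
The plan is to transport the whole question through the isometric isomorphism $\mathcal D$ of Theorem \ref{dico}, so that density of $M_\Lambda$ becomes a completeness statement for a sequence of reproducing kernels in $\mathcal H^2(\C_0)$, for which the Blaschke criterion recalled in the Introduction gives the answer directly.

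First I would observe that, since $\mathcal D:L^2([0,1],ds)\to\mathcal H^2(\C_0)$ is an isometric isomorphism, it maps dense subspaces to dense subspaces; hence $M_\Lambda$ is dense in $L^2([0,1],dx)$ if and only if $\mathcal D(M_\Lambda)$ is dense in $\mathcal H^2(\C_0)$. By linearity and by the identity \eqref{eq:monome-nr}, one has $\mathcal D(\e_{\lambda_n})=k_{\mu_n}$ with $\mu_n:=\overline{\lambda_n}+\frac{1}{2}$, so that $\mathcal D(M_\Lambda)=\span(k_{\mu_n}:n\ge1)$. Thus the density of $M_\Lambda$ is exactly the completeness of the sequence $(k_{\mu_n})_{n\ge1}$ in $\mathcal H^2(\C_0)$.

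Next I would check that the points $\mu_n$ are admissible: since $\Re(\lambda_n)>-\frac{1}{2}$, we have $\Re(\mu_n)=\Re(\lambda_n)+\frac{1}{2}>0$, so indeed $\mu_n\in\C_0$. Assuming, as we may, that the $\lambda_n$ (hence the $\mu_n$) are pairwise distinct, since repeated exponents do not enlarge $M_\Lambda$, the completeness criterion recalled in the Introduction applies: the sequence $(k_{\mu_n})$ fails to be complete precisely when $(\mu_n)$ is a Blaschke sequence, i.e. when $\sum_n \frac{\Re(\mu_n)}{|\mu_n|^2+1}<\infty$. Consequently $(k_{\mu_n})$ is complete if and only if this sum diverges.

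It then only remains to rewrite the Blaschke sum in terms of $\Lambda$. Since $\Re(\mu_n)=\frac{1}{2}+\Re(\lambda_n)$ and $|\mu_n|^2=|\overline{\lambda_n}+\frac{1}{2}|^2=|\lambda_n+\frac{1}{2}|^2$, the series $\sum_n \frac{\Re(\mu_n)}{|\mu_n|^2+1}$ coincides termwise with $\sum_n \frac{\frac{1}{2}+\Re(\lambda_n)}{|\lambda_n+\frac{1}{2}|^2+1}$, and its divergence is the stated condition. Chaining the three equivalences yields the theorem. Granted Theorem \ref{dico}, the identity \eqref{eq:monome-nr} and the quoted completeness criterion, the argument presents no genuine obstacle; the only points requiring care are the verification that $\mu_n\in\C_0$, the elementary reduction to distinct exponents (so that the criterion, stated for distinct points, is applicable and the series with multiplicities is handled correctly), and the routine arithmetic identification of the two series.
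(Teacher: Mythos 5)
Your proposal is correct and follows essentially the same route as the paper: transport density through the isometry $\mathcal D$, identify $\mathcal D(M_\Lambda)$ with the span of the kernels $k_{\overline{\lambda_n}+\frac12}$ via \eqref{eq:monome-nr}, and conclude by the Blaschke completeness criterion (which the paper re-derives in one line via the zero-set characterization of functions orthogonal to the kernels, while you cite it as recalled in the Introduction). Your added checks that $\mu_n\in\C_0$ and that one may assume distinct exponents are sensible refinements of the same argument, not a different proof.
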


\begin{proof} $M_\Lambda$ is dense in $L^2\big([0,1],dx\big)$ if and only if ${\mathcal D}(M_\Lambda)$ is dense in ${\mathcal H}^2\big(\C_0\big)$. But, by \eqref{eq:monome-nr},  ${\mathcal D}(M_\Lambda)$ is the space spanned by the functions $k_{\mu_n}$ where $\mu_n=\overline{\lambda_n}+\frac{1}{2}$ so any function $f$ in its orthogonal space is characterized by
$f\big(\mu_n\big)=0 $ for every $n$. Hence $M_\Lambda$ is dense in $L^2\big([0,1],dx\big)$ if and only if the only possible function $f$ is $f=0$, which happens if and only if the non Blaschke condition $\dis\sum\frac{\Re(\mu_n)}{1+\big|\mu_n\big|^2}=\dis\sum\frac{\frac{1}{2}+\Re(\lambda_n)}{1+\big|\lambda_n+\frac{1}{2}\big|^2}=+\infty$ is satisfied. 
\end{proof}

\bigskip

The main aspect we are interested in now is the non-dense framework. When the Blaschke condition 
\begin{equation}\label{eq:blaschke-condition-trans}
\dis\sum\frac{\frac{1}{2}+\Re(\lambda_n)}{1+\big|\lambda_n+\frac{1}{2}\big|^2}<+\infty
\end{equation}
 is satisfied, we have a proper subspace of  $L^2\big([0,1],dx\big)$, namely
$$M^2_\Lambda=span\big\{\e_{\lambda_n}\,|\;n\ge1\big\}\subsetneq L^2\big([0,1],dx\big).$$

Recently the geometry of such spaces and the  behavior of their operators were studied but actually a lot of natural questions are still open.

From our dictionary, the following result is gained for free but emphasizes a link between theory of M\"untz spaces and the theory of model spaces:

\begin{theorem}\label{dicomuntz}
Let $\Lambda=\big(\lambda_n\big)_{n\ge1}\subset\C_{-\frac{1}{2}}$ be a sequence which satisfies \eqref{eq:blaschke-condition-trans}. Consider $B_\Lambda$ the Blaschke product on $\C_0$ whose zeroes are the $\dis\overline{\lambda_n}+1/2$, for $n\ge1$. Then ${\mathcal D}$ realizes an isometric isomorphism between $M^2_\Lambda$ and the model space $\dis K_{B_\Lambda}$.

\end{theorem}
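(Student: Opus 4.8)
The plan is to transport everything through the isometric isomorphism $\mathcal{D}$ of Theorem~\ref{dico} and then read off the answer from the two facts already recorded in the introduction: the computation \eqref{eq:monome-nr}, which tells us where $\mathcal{D}$ sends each monomial, and the identification \eqref{eq:completude-nr}, which describes the closed span of a Blaschke sequence of reproducing kernels. Since $\mathcal{D}$ is a bounded bijection with bounded inverse, it commutes with the operations of taking linear spans and closures; consequently it maps the closed subspace $M^2_\Lambda=\overline{\span}\{\e_{\lambda_n}:n\ge1\}$ isometrically onto $\overline{\span}\{\mathcal{D}(\e_{\lambda_n}):n\ge1\}$. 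The whole argument then reduces to identifying this last subspace as a model space.

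First I would invoke \eqref{eq:monome-nr}, which gives $\mathcal{D}(\e_{\lambda_n})=k_{\mu_n}$ with $\mu_n=\overline{\lambda_n}+\frac{1}{2}$. Because $\lambda_n\in\C_{-\frac{1}{2}}$ we have $\Re(\mu_n)=\frac{1}{2}+\Re(\lambda_n)>0$, so each $\mu_n$ lies in $\C_0$; moreover $|\mu_n|^2=|\lambda_n+\frac{1}{2}|^2$, so the hypothesis \eqref{eq:blaschke-condition-trans} says exactly that $\sum_n \Re(\mu_n)/(1+|\mu_n|^2)<\infty$, i.e. that $(\mu_n)$ is a Blaschke sequence of $\C_0$. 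Thus the Blaschke product $B_\Lambda$ with zeros $(\mu_n)$ is well defined, and the sequence $(\mu_n)$ meets precisely the hypotheses under which \eqref{eq:completude-nr} applies.

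It then remains to apply \eqref{eq:completude-nr}, which asserts that $\overline{\span}\{k_{\mu_n}:n\ge1\}=K_{B_\Lambda}$. Chaining the three steps yields $\mathcal{D}(M^2_\Lambda)=K_{B_\Lambda}$, and since $\mathcal{D}$ is isometric on all of $L^2([0,1],ds)$, its restriction to $M^2_\Lambda$ is an isometric isomorphism onto $K_{B_\Lambda}$, as claimed.

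There is no genuinely hard step here — this is exactly the kind of statement the dictionary is designed to make transparent — but the one point that deserves care is the claim that an isometric isomorphism transports closed spans to closed spans. I would make this explicit: $\mathcal{D}(\span\{\e_{\lambda_n}\})=\span\{k_{\mu_n}\}$ holds at the algebraic level by linearity, and since both $\mathcal{D}$ and $\mathcal{D}^{-1}$ are continuous, $\mathcal{D}$ maps the closure of a set onto the closure of its image, upgrading the algebraic equality to an equality of closures. The only implicit hypothesis worth flagging is that the $\lambda_n$ (equivalently the $\mu_n$) be distinct, so that $B_\Lambda$ and the reproducing-kernel picture of \eqref{eq:completude-nr} are the intended ones.
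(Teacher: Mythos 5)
Your proposal is correct and follows exactly the paper's route: the paper's own proof is the one-line observation that, since $\mathcal D$ is an isometry, the result follows from \eqref{eq:completude-nr} and \eqref{eq:monome-nr}, which is precisely the argument you spell out (with the useful extra care about transporting closed spans and verifying the Blaschke condition for $\mu_n=\overline{\lambda_n}+\frac12$).
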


\begin{proof} Since $\mathcal D$ is an isometry, the result follows immediately from \eqref{eq:completude-nr} and \eqref{eq:monome-nr}.
 \end{proof}

\bigskip

\section{Applications} 

From this dictionary and the theory of model spaces, one can recover some known results and derive new ones. Thanks to this bridge, we can go from the M\"untz side to the model side, or reciprocally.

\subsection{From Model spaces to M\"untz spaces }

One can extend to complex powers the Gurariy-Macaev theorem which was available for real powers only in \cite{GM} in the $L^2$ framework (see \cite[Ex.4.7.2., vol.2]{N2} too):

\begin{corol} Let $\Lambda=\big(\lambda_n\big)_{n\ge1}\subset\C_{-\frac{1}{2}}$ be a sequence which satisfies \eqref{eq:blaschke-condition-trans}. TFAE
\begin{enumerate}
\item $\Big\{\big(2\Re(\lambda_n)+1\big)^\frac{1}{2}\,\e_{\lambda_n}\Big\}$ is a Riesz basis of $M_\Lambda^2$.
\medskip

\item   $\dis\inf_n \prod_{k\ne n}\Big|\frac{\lambda_n-\lambda_k}{\lambda_n+\overline{\lambda_k}+1} \Big| >0.$
\end{enumerate}
\end{corol}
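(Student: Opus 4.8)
The plan is to push the entire question through the dictionary map and then quote part~(1) of Theorem~\ref{NikolskiPavlovVolberg}. First I would invoke Theorem~\ref{dicomuntz}: the map $\mathcal D$ is an isometric isomorphism from $M_\Lambda^2$ onto the model space $K_{B_\Lambda}$, where $B_\Lambda$ is the Blaschke product with zeros $\mu_n=\overline{\lambda_n}+\frac12$. Since an isometric isomorphism transports a Riesz basis to a Riesz basis with the very same constants $c_1,c_2$, the family $\big\{(2\Re(\lambda_n)+1)^{1/2}\e_{\lambda_n}\big\}$ is a Riesz basis of $M_\Lambda^2$ if and only if its image $\big\{(2\Re(\lambda_n)+1)^{1/2}\mathcal D(\e_{\lambda_n})\big\}$ is a Riesz basis of $K_{B_\Lambda}$.

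The point of the normalizing factor is that this image is exactly the sequence of \emph{normalized} reproducing kernels appearing in Theorem~\ref{NikolskiPavlovVolberg}. Indeed, by \eqref{eq:monome-nr} one has $\mathcal D(\e_{\lambda_n})=k_{\mu_n}$, and the reproducing property gives $\|k_{\mu_n}\|_2^2=k_{\mu_n}(\mu_n)=\frac{1}{2\Re(\mu_n)}$. As $\Re(\mu_n)=\Re(\lambda_n)+\frac12$, this equals $\frac{1}{2\Re(\lambda_n)+1}$, so $(2\Re(\lambda_n)+1)^{1/2}\,k_{\mu_n}=k_{\mu_n}/\|k_{\mu_n}\|_2=x_n$. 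Hence condition~(1) of the corollary is equivalent to saying that $(x_n)$ is a Riesz basis of $K_{B_\Lambda}$, which by part~(1) of Theorem~\ref{NikolskiPavlovVolberg} holds if and only if $(\mu_n)$ satisfies the Carleson condition \eqref{eq:Carleson-Condition}.

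The last step is to rewrite that Carleson condition, stated in terms of the $\mu_n$, as condition~(2), stated in terms of the $\lambda_n$. This is just a conjugation identity: from $\mu_n=\overline{\lambda_n}+\frac12$ we get $\mu_n-\mu_k=\overline{\lambda_n-\lambda_k}$ and $\mu_n+\overline{\mu_k}=\overline{\lambda_n}+\lambda_k+1$, so that
$$
\left|\frac{\mu_n-\mu_k}{\mu_n+\overline{\mu_k}}\right|=\frac{|\lambda_n-\lambda_k|}{|\lambda_n+\overline{\lambda_k}+1|}=\left|\frac{\lambda_n-\lambda_k}{\lambda_n+\overline{\lambda_k}+1}\right|,
$$
using $|\overline w|=|w|$ in numerator and denominator alike. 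Substituting this into \eqref{eq:Carleson-Condition} turns the infimum of products over the $\mu_n$ into precisely the infimum in~(2), closing the equivalence. I do not expect any genuine obstacle here: the mathematical content is entirely supplied by Theorems~\ref{dicomuntz} and~\ref{NikolskiPavlovVolberg}, and what remains is the kernel--norm computation and the routine conjugation identity above.
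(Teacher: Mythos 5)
Your proposal is correct and follows essentially the same route as the paper: the authors also deduce the corollary by transporting the problem through the isometric isomorphism $\mathcal D$ (Theorems~\ref{dico}/\ref{dicomuntz}) and applying part~(1) of Theorem~\ref{NikolskiPavlovVolberg} to $\mu_n=\overline{\lambda_n}+\frac{1}{2}$. Your write-up merely makes explicit the details the paper leaves implicit, namely the norm computation $\|k_{\mu_n}\|_2^2=1/(2\Re(\lambda_n)+1)$ identifying the normalizing factor, and the conjugation identity translating the Carleson condition from the $\mu_n$ to the $\lambda_n$ --- both of which are accurate.
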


\begin{proof} It follows immediately from Theorem~\ref{dico} and the first part of Theorem~\ref{NikolskiPavlovVolberg} applied to  $\dis\mu_n=\overline{\lambda_n}+\frac{1}{2}\cdot$
\end{proof}
Remarks:
\begin{itemize}
\item It is well known that when the $\lambda_n$ are real and increasing, then the Carleson condition appearing in (2) is equivalent to the fact that $\Lambda$ is lacunary, {\it i.e.} there exists some $c>1$ such that $\lambda_{n+1}\ge c\lambda_n$ for every $n\ge1$.

\item We can also derive some applications in the framework of Dirichlet series: 

\hskip-10pt let $(q_k)$ be an increasing sequence of integers such that ${\dis\inf_n\prod_{k\ne n}\frac{|\ln(q_n/q_k)|}{\ln(q_nq_k\e)} >0}$ then 
$$\dis\int_0^{+\infty}\Big|\sum_{k\ge0} a_kq_k^{-s}\Big|^2\e^{-s}\;ds\approx\sum_{k\ge0}\frac{|a_k|^2}{\ln(q_k)}\,\cdot$$ 
\end{itemize}
\medskip

The Gurariy-Macaev theorem is revisited in \cite{GL} (in $L^p$ spaces) and it is also proved there that, exactly in the case of super-lacunary real sequences, the normalized monomials forms an  asymptotic orthonormal system. The following corollary extends this result to the case of complex powers in the hilbertian framework.

\begin{corol}\label{volb}
Let $\Lambda=\big(\lambda_n\big)_{n\ge1}\subset\C_{-\frac{1}{2}}$ be a sequence which satisfies \eqref{eq:blaschke-condition-trans}. TFAE
\begin{enumerate}
\item For every $a=(a_k)\in\ell^2$, we have
$$\dis\hskip-15pt\big(1-\eps_n\big)\Big(\sum_{k\ge n}|a_k|^2\Big)^\frac{1}{2}\le\Big\|\sum_{k\ge n} a_k\big(2\Re(\lambda_k)+1\big)^\frac{1}{2}\e_{\lambda_k}\Big\|_{L^2}\le\big(1+\eps_n\big)\Big(\sum_{k\ge n}|a_k|^2\Big)^\frac{1}{2}$$
 where $\eps_n\rightarrow0$.
\medskip

\item   $\dis \prod_{k\ne n}\Big|\frac{\lambda_n-\lambda_k}{\lambda_n+\overline{\lambda_k}+1} \Big| \longrightarrow 1\quad,\, \hbox{as } n\rightarrow+\infty.$
\end{enumerate}
\end{corol}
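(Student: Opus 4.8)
The plan is to transfer the statement through the dictionary $\mathcal{D}$ exactly as in the proof of the preceding corollary, replacing the use of part (1) of Theorem~\ref{NikolskiPavlovVolberg} by part (2). Concretely, I would set $\mu_n=\overline{\lambda_n}+\tfrac12$, so that the Blaschke condition \eqref{eq:blaschke-condition-trans} for $\Lambda$ is precisely the Blaschke condition for $(\mu_n)$ in $\C_0$, and by \eqref{eq:monome-nr} we have $\mathcal{D}(\e_{\lambda_n})=k_{\mu_n}$. By Theorem~\ref{dicomuntz}, $\mathcal{D}$ is then an isometric isomorphism of $M^2_\Lambda$ onto $K_{B_\Lambda}$, carrying the system $(\e_{\lambda_n})$ onto the reproducing kernel system $(k_{\mu_n})$.

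The first small computation is the normalization. Since $\|k_\mu\|_2^2=k_\mu(\mu)=\tfrac{1}{2\Re(\mu)}$ and $\Re(\mu_n)=\Re(\lambda_n)+\tfrac12$, the normalized reproducing kernel is $x_n=k_{\mu_n}/\|k_{\mu_n}\|_2=(2\Re(\lambda_n)+1)^{1/2}k_{\mu_n}$. Applying $\mathcal{D}^{-1}$, this shows that $\mathcal{D}$ sends the normalized monomial $(2\Re(\lambda_n)+1)^{1/2}\e_{\lambda_n}$ onto $x_n$. Because $\mathcal{D}$ is isometric, one has $\big\|\sum_{k\ge n}a_k(2\Re(\lambda_k)+1)^{1/2}\e_{\lambda_k}\big\|_{L^2}=\big\|\sum_{k\ge n}a_k x_k\big\|_{\mathcal{H}^2(\C_0)}$ for every finitely supported $(a_k)$, so the two-sided estimate in condition (1) is literally the statement that $(x_n)$ is an asymptotically orthonormal basis of $K_{B_\Lambda}$ (minimality and completeness being inherited from $(\e_{\lambda_n})$ through the isomorphism).

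I would then invoke part (2) of Theorem~\ref{NikolskiPavlovVolberg}: the sequence $(x_n)$ is an asymptotically orthonormal basis of $K_{B_\Lambda}$ if and only if $(\mu_n)$ is thin, i.e. $\prod_{k\ne n}\big|\frac{\mu_n-\mu_k}{\mu_n+\overline{\mu_k}}\big|\to 1$. The last step is to rewrite this in terms of $\Lambda$. Since $\mu_n-\mu_k=\overline{\lambda_n-\lambda_k}$ and $\mu_n+\overline{\mu_k}=\overline{\lambda_n}+\lambda_k+1$, we get $|\mu_n-\mu_k|=|\lambda_n-\lambda_k|$ and $|\mu_n+\overline{\mu_k}|=|\lambda_n+\overline{\lambda_k}+1|$, so the thin condition is exactly condition (2), and the chain of equivalences closes.

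There is no genuine analytic difficulty here: the content is entirely Volberg's theorem, and the only place to be careful is the bookkeeping with complex conjugates, both in the normalization constant and in the modulus of the denominator (checking $|\mu_n+\overline{\mu_k}|=|\lambda_n+\overline{\lambda_k}+1|$, which holds because conjugation preserves the modulus). I would also double-check that the definition of asymptotically orthonormal basis used in Theorem~\ref{NikolskiPavlovVolberg} is preserved verbatim by an isometric isomorphism; it is, since neither the $\ell^2$ side nor the norm side is altered, so the equivalence passes through $\mathcal{D}$ with the very same constants $\eps_n$.
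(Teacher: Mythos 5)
Your proof is correct and follows exactly the paper's own route: the authors likewise set $\mu_n=\overline{\lambda_n}+\tfrac{1}{2}$ and deduce the statement immediately from the isometric dictionary (Theorem~\ref{dico}) together with part (2) of Theorem~\ref{NikolskiPavlovVolberg}. The normalization computation and the conjugation bookkeeping you carry out are precisely the details the paper leaves implicit, and they check out.
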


\begin{proof} It follows immediately from Theorem~\ref{dico} and the second part of Theorem~\ref{NikolskiPavlovVolberg} applied to $\dis\mu_n=\overline{\lambda_n}+\frac{1}{2}\cdot$
\end{proof}
\bigskip 

Let $\mu\in\mathbb C_{-\frac{1}{2}}$ and $\Lambda=(\lambda_n)_n\subset\mathbb C_{-\frac{1}{2}}$ be a sequence which satisfies \eqref{eq:blaschke-condition-trans}. Denote by 
$$
x_\mu^\Lambda=P_{M_{\Lambda}^2}(e_\mu),
$$ 
where $P_{M_\Lambda^2}$ denotes the orthogonal projection of $L^2([0,1],dx)$ onto $M_{\Lambda}^2$. Now, given a sequence $(\mu_n)_{n\geq 1}\subset\mathbb C_{-\frac{1}{2}}$, it is natural to study the geometry of sequences $(x^\Lambda_{\mu_n})_n$. What can be said concerning the completeness, the basis properties,...? It turns out that if we combine our dictionary and known results on reproducing kernels of model spaces, we can get several results. We just mention one of them. The key is the following simple lemma.

\begin{lemma}\label{lem-repr-kern-model}
Let $\mu\in\mathbb C_{-\frac{1}{2}}$ and $\Lambda=(\lambda_n)_n\subset\mathbb C_{-\frac{1}{2}}$ be a sequence which satisfies \eqref{eq:blaschke-condition-trans} and let $B_\Lambda$ be the Blaschke product on $\C_0$ whose zeroes are the $\overline{\lambda_n}+\frac{1}{2}$, $n\geq 1$. Then
$$
\mathcal D(x_\mu^\Lambda)=k_{\overline{\mu}+\frac{1}{2}}^{B_\Lambda}.
$$
\end{lemma}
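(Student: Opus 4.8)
The plan is to exploit the fact that $\mathcal D$ is an isometric isomorphism (hence unitary) and therefore intertwines the orthogonal projections onto corresponding subspaces. Concretely, since $\mathcal D(M_\Lambda^2)=K_{B_\Lambda}$ by Theorem~\ref{dicomuntz}, and since a unitary maps the orthogonal complement of a subspace onto the orthogonal complement of its image, one obtains the intertwining relation
$$
\mathcal D\,P_{M_\Lambda^2}=P_{K_{B_\Lambda}}\,\mathcal D.
$$
Applying both sides to $\e_\mu$ and recalling that $x_\mu^\Lambda=P_{M_\Lambda^2}(\e_\mu)$ reduces the statement to the identity
$$
\mathcal D(x_\mu^\Lambda)=P_{K_{B_\Lambda}}\big(\mathcal D(\e_\mu)\big).
$$

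Next I would compute the right-hand side. By \eqref{eq:monome-nr} we have $\mathcal D(\e_\mu)=k_{\overline\mu+\frac12}$, the reproducing kernel of the full space $\mathcal H^2(\C_0)$ at the point $\overline\mu+\frac12$, which indeed lies in $\C_0$ because $\Re(\mu)>-\frac12$. Thus the whole matter comes down to identifying the orthogonal projection of the $\mathcal H^2$-kernel $k_{\overline\mu+\frac12}$ onto the model space $K_{B_\Lambda}$.

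The only substantive point is the general fact that, for any inner function $\Theta$ and any $\nu\in\C_0$, one has $P_{K_\Theta}(k_\nu)=k_\nu^\Theta$. I would justify this directly from the two reproducing properties: for every $f\in K_\Theta\subset\mathcal H^2(\C_0)$ the kernels satisfy $\langle f,k_\nu\rangle=f(\nu)=\langle f,k_\nu^\Theta\rangle$, so $k_\nu-k_\nu^\Theta\perp K_\Theta$; since $k_\nu^\Theta\in K_\Theta$, this says precisely that $k_\nu^\Theta$ is the orthogonal projection of $k_\nu$ onto $K_\Theta$. Taking $\Theta=B_\Lambda$ and $\nu=\overline\mu+\frac12$ then gives $P_{K_{B_\Lambda}}(k_{\overline\mu+\frac12})=k_{\overline\mu+\frac12}^{B_\Lambda}$, and combined with the reduction above this yields $\mathcal D(x_\mu^\Lambda)=k_{\overline\mu+\frac12}^{B_\Lambda}$, as claimed. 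I do not anticipate any genuine obstacle: the argument is a transport-of-structure through the unitary $\mathcal D$ together with the classical projection formula for model-space reproducing kernels.
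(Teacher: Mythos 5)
Your proof is correct, but it is packaged differently from the paper's. The paper argues weakly: it writes an arbitrary $f\in K_{B_\Lambda}$ as $f=\mathcal D g$ with $g\in M_\Lambda^2$, computes $\langle f,\mathcal D(x_\mu^\Lambda)\rangle_{\mathcal H^2}=\langle g,P_{M_\Lambda^2}\e_\mu\rangle_{L^2}=\langle g,\e_\mu\rangle_{L^2}$ using the isometry and the self-adjointness of the projection, computes $\langle f,k_{\overline{\mu}+\frac{1}{2}}^{B_\Lambda}\rangle_{\mathcal H^2}=f(\overline{\mu}+\tfrac{1}{2})=\int_0^1 g(t)t^{\overline{\mu}}\,dt=\langle g,\e_\mu\rangle_{L^2}$ using the reproducing property and the integral formula for $\mathcal D$, and concludes since both candidate functions lie in $K_{B_\Lambda}$. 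You instead factor the statement through two general lemmas: (i) a unitary carrying $M_\Lambda^2$ onto $K_{B_\Lambda}$ (Theorem~\ref{dicomuntz}) intertwines the corresponding orthogonal projections, and (ii) the classical formula $P_{K_\Theta}(k_\nu)=k_\nu^\Theta$, which you prove by the same reproducing-kernel duality the paper uses; the lemma then reads off as $\mathcal D(x_\mu^\Lambda)=P_{K_{B_\Lambda}}(\mathcal D\e_\mu)=P_{K_{B_\Lambda}}(k_{\overline{\mu}+\frac{1}{2}})=k_{\overline{\mu}+\frac{1}{2}}^{B_\Lambda}$, using \eqref{eq:monome-nr}. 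The mathematical substance is identical---unitarity of $\mathcal D$ plus reproducing-kernel evaluations---but your modular route never touches the integral representation of $\mathcal D$ and isolates a reusable general principle (transport of projections plus the kernel-compression formula), whereas the paper's version is a self-contained computation that avoids stating any auxiliary general facts. Both arguments are complete and correct.
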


\begin{proof} According to Theorem~\ref{dico},  for every $f\in K_{B_\Lambda}$, there exists a unique $g\in M_\Lambda^2$ such that $f=\mathcal Dg$. On one hand, we have 
$$
\langle f,\mathcal D(x_\mu^\Lambda)\rangle_{\mathcal H^2(\mathbb C_0)}=\langle \mathcal D(g),\mathcal D(P_{M_\Lambda^2}e_\mu)\rangle_{\mathcal H^2(\mathbb C_0)}=\langle g,P_{M_\Lambda^2}e_\mu \rangle_{L^2([0,1]}=\langle g, e_\mu \rangle_{L^2([0,1]}.
$$
On the other hand, we have 
$$
\langle f, k_{\overline{\mu}+\frac{1}{2}}^{B_\Lambda}\rangle_{\mathcal H^2(\mathbb C_0)}=f(\overline{\mu}+\frac{1}{2})=(\mathcal D g)(\overline{\mu}+\frac{1}{2})=\int_{0}^1 g(t)t^{\overline\mu}\,dt=\langle g, e_\mu \rangle_{L^2([0,1]}.
$$
Hence, for every $f\in K_{B_\Lambda}$, we deduce
$$
\langle f,\mathcal D(x_\mu^\Lambda)\rangle_{\mathcal H^2(\mathbb C_0)}=\langle f, k_{\overline{\mu}+\frac{1}{2}}^{B_\Lambda}\rangle_{\mathcal H^2(\mathbb C_0)},
$$
which gives the result. 
\end{proof}
There exists a large literature devoted to geometric properties of sequences of reproducing kernels of model spaces (see for instance \cite{N}). Using Lemma~\ref{lem-repr-kern-model}, we can obtain similar results for sequences $(x_{\mu_n}^\Lambda)_n$. As an example we mention the following. 
\begin{theorem}
Let $\Lambda=(\lambda_n)_n\subset\mathbb C_{-\frac{1}{2}}$ be a sequence which satisfies \eqref{eq:blaschke-condition-trans} and let  $(\mu_n)_n \subset\mathbb C_{-\frac{1}{2}}$ satisfying 
$$
\lim_{n\to\infty}\prod_{k\geq 1}\left|\frac{\mu_n-\lambda_k}{\overline{\mu_n}+\lambda_k+1}\right|=0.
$$
The following are equivalent: 
\begin{enumerate}
\item there exists $N$ sufficiently large such that $(x_{\mu_n}^\Lambda/\|x_{\mu_n}^\Lambda\|)_{n\geq N}$ is a Riesz basis for its closed linear span; 
\item the sequence $(\overline{\mu_n}+\frac{1}{2})_n$ satisfies the Carleson condition \eqref{eq:Carleson-Condition}.
\end{enumerate}

\end{theorem}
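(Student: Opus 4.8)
The plan is to push the entire question, through the isometric dictionary $\mathcal D$, to a statement purely about normalized reproducing kernels of the model space $K_{B_\Lambda}$, and then to settle that statement by comparing these kernels with the ordinary Hardy kernels. Write $\nu_n=\overline{\mu_n}+\tfrac12\in\C_0$ and $\Theta=B_\Lambda$. By Theorem~\ref{dico} the map $\mathcal D$ is an isometric isomorphism, so it preserves the property of being a Riesz basis of one's closed linear span, and by Lemma~\ref{lem-repr-kern-model} it sends $x_{\mu_n}^\Lambda$ to the model kernel $k_{\nu_n}^{\Theta}$, hence $x_{\mu_n}^\Lambda/\|x_{\mu_n}^\Lambda\|$ to the normalized kernel $\tilde k_{\nu_n}^{\Theta}:=k_{\nu_n}^{\Theta}/\|k_{\nu_n}^{\Theta}\|$. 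Thus assertion (1) is equivalent to: for some large $N$, $(\tilde k_{\nu_n}^{\Theta})_{n\ge N}$ is a Riesz sequence in $K_\Theta$. I would first record that the standing hypothesis is exactly $|\Theta(\nu_n)|\to0$: using $\nu_n-(\overline{\lambda_k}+\tfrac12)=\overline{\mu_n-\lambda_k}$ and $\nu_n+(\lambda_k+\tfrac12)=\overline{\mu_n}+\lambda_k+1$ in the Blaschke factors of $\Theta$ gives $|\Theta(\nu_n)|=\prod_{k}\big|\frac{\mu_n-\lambda_k}{\overline{\mu_n}+\lambda_k+1}\big|\to0$; similarly, the computation behind \eqref{eq:Carleson-Condition} shows that (2) says precisely that $(\nu_n)$ is a Carleson sequence. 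The theorem therefore reduces to the model-space statement: \emph{if $|\Theta(\nu_n)|\to0$, then $(\tilde k_{\nu_n}^{\Theta})_{n\ge N}$ is a Riesz sequence for some $N$ if and only if $(\nu_n)$ is Carleson.}

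The bridge to the Hardy kernels $k_{\nu}(z)=1/(z+\overline\nu)$ is the orthogonal decomposition $k_{\nu}=k_{\nu}^{\Theta}+\overline{\Theta(\nu)}\,\Theta k_{\nu}$, where $k_\nu^\Theta\in K_\Theta$ and $\Theta k_\nu\in\Theta\mathcal H^2=K_\Theta^\perp$. It gives $\|k_\nu^\Theta\|^2=(1-|\Theta(\nu)|^2)\|k_\nu\|^2$ and, after normalization, $\tilde k_{\nu}=d_\nu\,\tilde k_{\nu}^{\Theta}+\overline{\Theta(\nu)}\,\Theta\tilde k_{\nu}$ with $d_\nu=\sqrt{1-|\Theta(\nu)|^2}$, the last summand having norm $|\Theta(\nu)|$; in particular $\|\tilde k_{\nu_n}-\tilde k_{\nu_n}^{\Theta}\|\to0$. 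The gain in passing to the Hardy side is that, by Theorem~\ref{NikolskiPavlovVolberg}(1) applied to $B_{(\nu_n)}$ together with \eqref{eq:completude-nr}, the system $(\tilde k_{\nu_n})$ is a Riesz sequence in $\mathcal H^2(\C_0)$ if and only if $(\nu_n)$ is Carleson. It thus suffices to prove that, on a far enough tail, $(\tilde k_{\nu_n})$ and $(\tilde k_{\nu_n}^{\Theta})$ are Riesz sequences simultaneously.

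For sufficiency this is a uniform perturbation: if $(\tilde k_{\nu_n})$ has Riesz bounds $c_1,c_2$, then expressing $\tilde k_{\nu_n}^\Theta-\tilde k_{\nu_n}=(d_n^{-1}-1)\tilde k_{\nu_n}-d_n^{-1}\overline{\Theta(\nu_n)}\,\Theta\tilde k_{\nu_n}$ and using the upper bound $c_2$ together with the isometry of multiplication by $\Theta$, one bounds $\big\|\sum a_n(\tilde k_{\nu_n}^{\Theta}-\tilde k_{\nu_n})\big\|$ by $c_2\epsilon_N\|a\|_2\le (c_2/c_1)\epsilon_N\big\|\sum a_n\tilde k_{\nu_n}\big\|$, where $\epsilon_N=\sup_{n\ge N}\big(|d_n^{-1}-1|+|\Theta(\nu_n)|d_n^{-1}\big)\to0$; stability of Riesz sequences under operator-small perturbations then makes $(\tilde k_{\nu_n}^{\Theta})_{n\ge N}$ a Riesz sequence for large $N$. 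The necessity direction is the main obstacle, since one must transfer the \emph{upper} (Bessel) bound back from the $\Theta$-kernels to the Hardy kernels, which is not a one-step perturbation. Here I would exploit $\delta:=\sup_{n\ge N}|\Theta(\nu_n)|<1$ (available precisely because $|\Theta(\nu_n)|\to0$): iterating $\sum a_n\tilde k_{\nu_n}=\sum a_n d_n\tilde k_{\nu_n}^{\Theta}+\Theta\sum a_n\overline{\Theta(\nu_n)}\tilde k_{\nu_n}$ produces a geometric (Neumann-type) series of ratio $\le\delta$, yielding a Bessel bound $\le c_2/(1-\delta)$ for $(\tilde k_{\nu_n})_{n\ge N}$ from the Bessel bound $c_2$ of $(\tilde k_{\nu_n}^{\Theta})_{n\ge N}$; with this bound in hand, the perturbation of the previous sentence transfers the lower bound as well, so $(\tilde k_{\nu_n})_{n\ge N}$ is a Riesz sequence and hence $(\nu_n)_{n\ge N}$, and therefore $(\nu_n)_{n\ge1}$ (the Carleson condition being insensitive to finitely many points), is Carleson. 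Alternatively, both implications follow at once from the known characterization of unconditional sequences of reproducing kernels in $K_\Theta$ as those $(\nu_n)$ which are Carleson and satisfy $\sup_n|\Theta(\nu_n)|<1$, the roles of the tail index $N$ and of the hypothesis $|\Theta(\nu_n)|\to0$ being exactly to supply the second condition for free.
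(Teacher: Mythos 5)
Your proposal is correct, and its first half coincides exactly with the paper's proof: like the authors, you use Theorem~\ref{dico} and Lemma~\ref{lem-repr-kern-model} to translate assertion (1) into the statement that the normalized model kernels $\big(k^{B_\Lambda}_{\nu_n}/\|k^{B_\Lambda}_{\nu_n}\|\big)_{n\ge N}$, with $\nu_n=\overline{\mu_n}+\tfrac12$, form a Riesz basis of their closed span in $K_{B_\Lambda}$, and you identify the standing hypothesis with $|B_\Lambda(\nu_n)|\to 0$ exactly as the paper does. The divergence is in how the resulting model-space criterion is settled: the paper simply invokes \cite[Theorem 3.2]{HNP}, whereas you reprove the needed special case of that theorem from scratch, using only Theorem~\ref{NikolskiPavlovVolberg}(1) (already quoted in the paper) together with the orthogonal decomposition $k_\nu=k_\nu^{B_\Lambda}+\overline{B_\Lambda(\nu)}\,B_\Lambda k_\nu$: a relative perturbation argument gives the implication from the Carleson condition to the Riesz property on a tail, and your Neumann-type iteration (valid because, for finitely supported coefficients, the remainder is at most $\delta^m\sum_n|a_n|\to 0$) transfers the Bessel bound back to the Cauchy kernels for the converse. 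This buys self-containedness, since the only external input is a result the paper already states, at the cost of length; the paper's citation is shorter and rests on a theorem valid under the weaker hypothesis $\sup_n|B_\Lambda(\nu_n)|<1$. Two small points you should make explicit to complete your route: in the necessity direction, before applying Theorem~\ref{NikolskiPavlovVolberg}(1) and \eqref{eq:completude-nr} you need $(\nu_n)_{n\ge N}$ to be a Blaschke sequence, which does follow since a non-Blaschke sequence of Cauchy kernels remains complete after deleting any single term and hence cannot be minimal, let alone a Riesz sequence; and the claim that adjoining finitely many (distinct) points to a Carleson sequence keeps it Carleson, while standard (both the separation and the Carleson-measure conditions survive), is not completely trivial and deserves a reference or a line of proof.
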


\begin{proof} Let $B_\Lambda$ be the Blaschke product associated to $(\overline{\lambda_n}+\frac{1}{2})_n$. According to Lemma~\ref{lem-repr-kern-model}, the sequence $(x_{\mu_n}^\Lambda/\|x_{\mu_n}^\Lambda\|)_{n\geq N}$ is a Riesz basis for its closed linear span if and only the normalized sequence of reproducing kernels $(k_{\overline{\mu_n}+\frac{1}{2}}^{B_\Lambda}/\|k_{\overline{\mu_n}+\frac{1}{2}}^{B_\Lambda}\|)_{n\geq N}$ is a Riesz basis for its closed linear span. The hypothesis means that 
$\lim_{n\to \infty}B_\Lambda(\overline{\mu_n}+\frac{1}{2})=0$ and it remains to apply \cite[Theorem 3.2]{HNP}. 
\end{proof}

In \cite{Baranov}, A. Baranov used an approach of N. Makarov and A. Poltoratski to give a criterion for completeness of systems of reproducing kernels in the model spaces. We can also use our dictionary to get similar results in M\"untz spaces.  In that spirit, we give a stability result for completeness. 

\begin{theorem}\label{Thm-Stability-completeness}
Let $\Lambda=(\lambda_n)_n\subset\mathbb C_{-\frac{1}{2}}$ be a sequence which satisfies \eqref{eq:blaschke-condition-trans} and let  $(\mu_n)_n \subset\mathbb C_{-\frac{1}{2}}$ satisfying $\mathcal R\in L^\infty(\mathbb R)$ where
$$
\mathcal R(t)=\sum_n \left| \frac{\lambda_n-\mu_n}{\mu_n+\frac{1}{2}-it}\right|.
$$
Then $(x_{\mu_n}^\Lambda)_n$ is complete in $M_\Lambda^2$. 
\end{theorem}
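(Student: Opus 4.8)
The plan is to carry the statement over to the model‑space side through the isometry $\mathcal D$ and then read it as a stability result for uniqueness sets of $K_{B_\Lambda}$. Write $\alpha_n=\overline{\lambda_n}+\frac12$ for the zeros of $B_\Lambda$ and $\beta_n=\overline{\mu_n}+\frac12$. By Lemma~\ref{lem-repr-kern-model} one has $\mathcal D(x_{\mu_n}^\Lambda)=k_{\beta_n}^{B_\Lambda}$, so completeness of $(x_{\mu_n}^\Lambda)_n$ in $M_\Lambda^2$ is equivalent to completeness of $(k_{\beta_n}^{B_\Lambda})_n$ in $K_{B_\Lambda}$. Since $\langle f,k_{\beta_n}^{B_\Lambda}\rangle=f(\beta_n)$ for $f\in K_{B_\Lambda}$, this amounts to showing that $\{\beta_n\}$ is a uniqueness set for $K_{B_\Lambda}$, i.e. the only $f\in K_{B_\Lambda}$ vanishing at every $\beta_n$ is $f=0$. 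Throughout I would keep in mind that, by \eqref{eq:completude-nr}, the unperturbed set $\{\alpha_n\}$ already is such a uniqueness set.

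Next I would record the geometric meaning of the hypothesis. A direct computation gives $|\lambda_n-\mu_n|=|\alpha_n-\beta_n|$, while $|\mu_n+\frac12-it|=|\beta_n+it|$, which after the substitution $\zeta=-it\in i\R$ equals $|\zeta-\beta_n|$. Hence $\mathcal R\in L^\infty(\R)$ says exactly that
$$
\sup_{\zeta\in i\R}\ \sum_n \frac{|\alpha_n-\beta_n|}{|\zeta-\beta_n|}<\infty ,
$$
a uniform control of the perturbation $\alpha_n\mapsto\beta_n$ relative to the distance to the boundary. The heart of the matter is therefore to compare the Blaschke products $B_\Lambda=\prod b_{\alpha_n}$ and $B_\beta=\prod b_{\beta_n}$ on $i\R$, where $b_\alpha(z)=\frac{z-\alpha}{z+\overline\alpha}$.

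For the core argument I would argue by contradiction: suppose $f\in K_{B_\Lambda}$, $f\neq0$, with $f(\beta_n)=0$ for all $n$. Then $\{\beta_n\}$ automatically satisfies the Blaschke condition and $f=B_\beta h$ with $h\in\mathcal H^2(\C_0)$, $\|h\|_2=\|f\|_2$. Setting $F=B_\Lambda h\in B_\Lambda\mathcal H^2(\C_0)=K_{B_\Lambda}^\perp$, the orthogonality $f\perp F$ gives $\|f-F\|_2^2=\|f\|_2^2+\|F\|_2^2=2\|f\|_2^2$, while on the boundary $|f-F|=|B_\Lambda-B_\beta|\,|h|=|B_\Lambda-B_\beta|\,|f|$ (using $|B_\beta|=1$ on $i\R$). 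Comparing the two expressions yields the exact identity
$$
\int_{i\R}\bigl(2-|B_\Lambda-B_\beta|^2\bigr)\,|f|^2\,\frac{|d\zeta|}{2\pi}=0 .
$$
Since a nonzero $\mathcal H^2$-function cannot vanish on a boundary set of positive measure, it then suffices to prove that $|B_\Lambda-B_\beta|<\sqrt2$ a.e. on $i\R$: this makes the integrand strictly positive and forces $f=0$.

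The main obstacle is precisely this last boundary estimate. Writing $B_\Lambda/B_\beta=\e^{i\theta}$ on $i\R$, one has $|B_\Lambda-B_\beta|=2|\sin(\theta/2)|$, so $|B_\Lambda-B_\beta|<\sqrt2$ is equivalent to $\cos\theta>0$; and the elementary bound $|\arg(b_{\alpha_n}/b_{\beta_n})(\zeta)|\lesssim |\alpha_n-\beta_n|/|\zeta-\beta_n|$ only yields $|\theta(\zeta)|\lesssim \sum_n|\alpha_n-\beta_n|/|\zeta-\beta_n|\le\|\mathcal R\|_\infty$. Thus the identity closes at once when $\|\mathcal R\|_\infty$ is small, but for a merely bounded $\mathcal R$ the naive contraction fails and one must invoke the finer completeness criterion of Baranov, built on the Makarov--Poltoratski technique, to conclude that $\{\beta_n\}$ remains a uniqueness set. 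Establishing (or quoting) that criterion---showing that boundedness of $\mathcal R$, and not just smallness, already excludes a nonzero $f\in K_{B_\Lambda}$ with the prescribed zeros---is the step I expect to be delicate, and is exactly the point where the dictionary is meant to hand the problem over to the model-space literature.
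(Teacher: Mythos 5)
Your proposal is correct and follows essentially the same route as the paper: reduce via Lemma~\ref{lem-repr-kern-model} to the completeness of the kernels $(k_{\overline{\mu_n}+\frac12}^{B_\Lambda})_n$ in $K_{B_\Lambda}$, and then invoke the half-plane analogue of Baranov's stability theorem \cite[Theorem 1.3]{Baranov}, whose hypothesis is exactly $\mathcal R\in L^\infty(\mathbb R)$ --- which is precisely all the paper's own two-line proof does. Your intermediate orthogonality identity (comparing $f=B_\beta h$ with $F=B_\Lambda h$ and using $\int_{i\mathbb R}(2-|B_\Lambda-B_\beta|^2)|f|^2=0$) is a nice self-contained bonus that settles the small-$\|\mathcal R\|_\infty$ regime, and you correctly diagnose that the merely bounded case cannot be closed this way and must be handed to the Makarov--Poltoratski/Baranov machinery.
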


\begin{proof}
Let $B_\Lambda$ be the Blaschke product associated to $(\overline{\lambda_n}+\frac{1}{2})_n$. According to Lemma~\ref{lem-repr-kern-model}, the sequence $(x_{\mu_n}^\Lambda)_n$  is complete in $M_\Lambda^2$ if and only if $(k_{\overline{\mu_n}+\frac12}^{B_\Lambda})_n$ is complete in $K_{B_\Lambda}$. It remains to apply the analogue of \cite[Theorem 1.3]{Baranov} in $\mathcal H^2(\mathbb C_0)$.
\end{proof}

Another application concerns the summation basis: given 
$\Lambda=(\lambda_n)_{n\geq 1}\subset\mathbb C_{-\frac{1}{2}}$ satisfying \eqref{eq:blaschke-condition-trans}, we know that $(e_{\lambda_n})_{n\geq 1}$ is minimal. Moreover, when the Carleson's condition is satisfied, we saw previously that  $(e_{\lambda_n})_{n\geq 1}$ is a Riesz basis for $M_{\Lambda}^2$. Actually, in the non dense case, when the Carleson's condition is not satisfied, we can still prove that $(e_{\lambda_n})_{n\geq 1}$ is a summation basis for $M_\Lambda^2$.

Recall that if $(x_n)_{n\geq 1}$ is a complete and minimal sequence in a Hilbert space $H$, and $(x_n^*)_{n\geq 1}$ is its biorthogonal sequence, then $(x_n)_{n\geq 1}$ is said to be a summation basis for the Hilbert space $H$ if there exists an infinite matrix $A=(a_{n,k})_{n,k\geq 1}$ such that for every $x\in H$, we have 
$$
x=\lim_{k\to \infty}\sum_{n=1}^{\infty}a_{n,k}\langle x,x_n^*\rangle x_n.
$$

\begin{theorem}
Let $\Lambda=(\lambda_n)_{n\geq 1}\subset \mathbb C_{-\frac{1}{2}}$ be a sequence which satisfies \eqref{eq:blaschke-condition-trans}. Then $(e_{\lambda_n})_{n\geq 1}$ is a summation basis for $M_{\Lambda}^2$.
\end{theorem}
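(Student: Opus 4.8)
The plan is to transport the statement to the model-space side via the dictionary and there exhibit a summation method built from the Malmquist--Walsh orthonormal basis.

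First I would apply the dictionary. By \eqref{eq:monome-nr} and Theorem~\ref{dicomuntz}, $\mathcal D$ is an isometric isomorphism of $M_\Lambda^2$ onto $K_{B_\Lambda}$ carrying $e_{\lambda_n}$ to $k_{\mu_n}$, where $\mu_n=\overline{\lambda_n}+\frac12$. Since an isometric isomorphism preserves completeness, minimality and biorthogonal systems, and intertwines the matrix reconstruction in the definition, it is equivalent to prove that the reproducing kernels $(k_{\mu_n})_{n\ge1}$ form a summation basis for $K_{B_\Lambda}$. By \eqref{eq:completude-nr} this family is complete and minimal in $K_{B_\Lambda}$, and its biorthogonal system $(x_n^*)$ is the image under $\mathcal D$ of the biorthogonal system of $(e_{\lambda_n})$; concretely $x_n^*$ is the unique element of $K_{B_\Lambda}$ with $x_n^*(\mu_m)=\delta_{nm}$.

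Next I would introduce the Malmquist--Walsh basis. Writing $b_\mu(z)=\frac{z-\mu}{z+\overline\mu}$ for the elementary Blaschke factors of $\C_0$ and $B_N=\prod_{n=1}^N b_{\mu_n}$, the functions
$$
\phi_n=\sqrt{2\Re(\mu_n)}\,\Big(\prod_{j<n}b_{\mu_j}\Big)k_{\mu_n},\qquad n\ge1,
$$
form an orthonormal basis of $K_{B_\Lambda}$. The crucial structural fact is that $(\phi_n)$ and $(k_{\mu_n})$ are related by an invertible lower-triangular change of basis: for each $N$ one has $\span(\phi_1,\dots,\phi_N)=\span(k_{\mu_1},\dots,k_{\mu_N})=K_{B_N}$, an increasing chain of subspaces whose union is dense in $K_{B_\Lambda}$. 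Consequently the orthogonal projections $P_N$ onto $K_{B_N}$ satisfy $P_N\to I$ strongly, and expanding $P_Nx$ in the triangular basis $(k_{\mu_n})$ shows that, on the dense set $\bigcup_N K_{B_N}$, the partial sums $\sum_{n=1}^N\langle x,x_n^*\rangle k_{\mu_n}$ reconstruct $x$ exactly for $N$ large.

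The heart of the matter, and the step I expect to be the main obstacle, is to upgrade this into the precise matrix form required by the definition, valid for \emph{every} $x\in K_{B_\Lambda}$. The difficulty is that when $\Lambda$ fails the Carleson condition \eqref{eq:Carleson-Condition} the bare partial-sum operators $Q_Nx=\sum_{n\le N}\langle x,x_n^*\rangle k_{\mu_n}$ are not uniformly bounded (that boundedness is exactly the Riesz-basis case of Theorem~\ref{NikolskiPavlovVolberg}), so $Q_Nx$ need not converge for every $x$ and $(k_{\mu_n})$ is in general not a Schauder basis. To circumvent this I would choose the summation matrix $A=(a_{n,k})$ so that the operators $x\mapsto\sum_n a_{n,k}\langle x,x_n^*\rangle k_{\mu_n}$ remain uniformly bounded while still reducing to the identity on each $K_{B_N}$; in the triangular coordinates this amounts to inserting a diagonal multiplier between the (possibly unbounded) transition operators and controlling the resulting conjugation. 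Equivalently, one groups the zeros $(\mu_n)$ into consecutive blocks along which the partial sums are controlled, turning $(k_{\mu_n})$ into a basis with brackets and hence a summation basis. This last point is the genuine model-space input, for which I would invoke the corresponding result on reproducing kernels of model spaces (see \cite{N}); combined with the dictionary it yields the summation basis property for $(e_{\lambda_n})$ in $M_\Lambda^2$.
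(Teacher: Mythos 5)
Your reduction to the model-space side is exactly the paper's: $\mathcal D$ carries $e_{\lambda_n}$ to $k_{\mu_n}$ with $\mu_n=\overline{\lambda_n}+\frac12$, biorthogonality is preserved (the paper checks $\mathcal D^*k_{\mu_n}^*=e_{\lambda_n}^*$ via $\langle e_{\lambda_p},\mathcal D^*k_{\mu_n}^*\rangle=\langle k_{\mu_p},k_{\mu_n}^*\rangle=\delta_{n,p}$), and so the statement becomes: the kernels $(k_{\mu_n})_n$ at the zeros of $B_\Lambda$ form a summation basis of $K_{B_\Lambda}$. You also correctly isolate the obstruction: without \eqref{eq:Carleson-Condition} the partial-sum (skew projection) operators are not uniformly bounded, so exact reconstruction on the dense union $\bigcup_N K_{B_N}$ does not pass to the limit. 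Up to here you and the paper coincide.

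The gap is in the step you call the genuine model-space input. The result the paper invokes (\cite[p.~194]{N}, or \cite[vol.1, p.~620, Ex.~15.3.1]{FM}) is not a ``basis with brackets'' theorem, nor does it provide uniformly bounded operators of the required matrix form acting as the identity on each $K_{B_N}$; neither of the two mechanisms you sketch is the one that works, and you give no construction of either. The actual summation matrix is $a_{n,k}=\overline{B^{(k)}(\mu_n)}$, where $B^{(k)}=\prod_{n\geq k}\alpha_n b_{\mu_n}$ are the suitably normalized \emph{tail} Blaschke products: for every $g\in K_{B_\Lambda}$,
$$
g=\lim_{k\to\infty}\sum_{n\geq 1}\overline{B^{(k)}(\mu_n)}\,\langle g,k_{\mu_n}^*\rangle\, k_{\mu_n}.
$$
The operators realizing this are contractions of Toeplitz type (restrictions of $T_{\overline{B^{(k)}}}$ to $K_{B_\Lambda}$): uniform boundedness comes for free, each $k_{\mu_n}$ is an eigenvector with eigenvalue $\overline{B^{(k)}(\mu_n)}$ (which vanishes for $n\geq k$, so the sums are finite), and strong convergence to the identity holds because the normalized tails tend to $1$ under the Blaschke condition \eqref{eq:blaschke-condition-trans}. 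Crucially these operators are \emph{not} the identity on any $K_{B_N}$: the surviving coefficients are damped by factors of modulus less than $1$, not left intact. Your requirements --- diagonal operators that are simultaneously uniformly bounded and equal to the identity on each $K_{B_N}$, or equivalently (as you assert) a blocking making $(k_{\mu_n})$ a basis with brackets --- are strictly stronger demands, which would need some uniform control of the angles between $\span(k_{\mu_n}:n\leq N)$ and $\overline{\span}(k_{\mu_n}:n>N)$; nothing in the hypothesis supplies that, and it is not the content of the cited result. So as written your argument either stops at the dense-subspace reconstruction (which you admit is insufficient) or rests on an unproved bracket claim. Replacing that step by the tail-product damped summation above --- which is exactly what the paper does --- closes the gap.
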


\begin{proof}
Let $B$ be the Blaschke product whose zeroes are $\mu_n=\overline{\lambda_n}+\frac{1}{2}$, $n\geq 1$. According to Theorem~\ref{dicomuntz}, the operator $\mathcal D$ realizes an isometric isomorphism between $M_\Lambda^2$ and $K_B$ and 
$\mathcal D(e_{\lambda_n})=k_{\mu_n}$, $n\geq 1$, where $k_\mu$ is the reproducing kernel of ${\mathcal H}^2\big(\C_0\big)$. Denote by 
$$
B^{(k)}=\prod_{n\geq k}\alpha_nb_{\mu_n},\qquad k\geq 1,
$$
where $b_\mu(z)=(z-\mu)/(z+\overline{\mu})$ is the elementary Blaschke factor in ${\mathcal H}^2\big(\C_0\big)$ and $\alpha_n$ is a suitable complex number with modulus $1$. It is known that for every $g\in K_B$, we have 
$$
g=\lim_{k\to\infty}\sum_{n\geq 1}\overline{B^{(k)}(\mu_n)}\langle g,k_{\mu_n}^*\rangle k_{\mu_n},
$$
see \cite[page 194]{N} or \cite[vol.1., p. 620, Ex.15.3.1.]{FM}. Using the isomorphism $\mathcal D$, we get that for every $f\in M_\Lambda^2$, 
$$
\begin{aligned}
f&=\lim_{k\to \infty} \sum_{n\geq 1}\overline{B^{(k)}(\mu_n)} \langle \mathcal D f,k_{\mu_n}^*\rangle e_{\lambda_n} \\
&=\lim_{k\to \infty} \sum_{n\geq 1}\overline{B^{(k)}(\mu_n)} \langle f, \mathcal D ^*k_{\mu_n}^*\rangle e_{\lambda_n}
\end{aligned}
$$
Note that 
$$
\langle e_{\lambda_p},\mathcal D^*k_{\mu_n}^*\rangle=\langle \mathcal D(e_{\lambda_p}),k_{\mu_n}^*\rangle =\langle k_{\mu_p},k_{\mu_n}^*\rangle=\delta_{n,p}, 
$$
which gives that $\mathcal D^*(k_{\mu_n}^*)=e_{\lambda_n}^*$, $n\geq 1$. Hence
$$
f=\lim_{k\to \infty} \sum_{n\geq 1}\overline{B^{(k)}(\mu_n)} \langle f, e_{\lambda_n}^*\rangle e_{\lambda_n}.
$$
That proves that $(e_{\lambda_n})_{n\geq 1}$ is a summation basis for $M_\Lambda^2$. 
\end{proof}

We immediately get the following. 
\begin{corol}
Let $\Lambda=(\lambda_n)_{n\geq 1}\subset \mathbb C_{-\frac{1}{2}}$ be a sequence which satisfies \eqref{eq:blaschke-condition-trans}. If $(e_{\lambda_n}^*)_{n\geq 1}$ is the biorthogonal sequence associated to $(e_{\lambda_n})_{n\geq 1}$. Then $(e_{\lambda_n}^*)_{n\geq 1}$ is complete in $M_\Lambda^2$. 
\end{corol}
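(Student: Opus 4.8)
The plan is to deduce completeness of the biorthogonal sequence directly from the summation basis property established in the preceding theorem. Recall the standard reformulation: a sequence $(y_n)_{n\geq 1}$ in a Hilbert space $H$ is complete if and only if the only vector $x\in H$ satisfying $\langle x,y_n\rangle=0$ for every $n$ is $x=0$. Accordingly, to show that $(e_{\lambda_n}^*)_{n\geq 1}$ is complete in $M_\Lambda^2$, I would fix an arbitrary $x\in M_\Lambda^2$ with $\langle x,e_{\lambda_n}^*\rangle=0$ for every $n\geq 1$ and aim to conclude $x=0$.

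The key step is to invoke the previous theorem, which provides a matrix $A=(a_{n,k})_{n,k\geq 1}$ such that every $x\in M_\Lambda^2$ is recovered as $x=\lim_{k\to\infty}\sum_{n\geq 1}a_{n,k}\langle x,e_{\lambda_n}^*\rangle e_{\lambda_n}$. Applying this expansion to the chosen $x$, every coefficient $\langle x,e_{\lambda_n}^*\rangle$ vanishes by hypothesis, so each partial sum is identically zero and hence $x=0$ in the limit. This is precisely the triviality of the orthogonal complement of $(e_{\lambda_n}^*)_{n\geq 1}$, which yields the desired completeness.

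There is essentially no serious obstacle here: the statement is an immediate corollary of the summation basis property. The only points requiring (minor) care are the reformulation of completeness in terms of the orthogonal complement, and the observation that the scalars entering the summation formula are exactly the inner products $\langle x,e_{\lambda_n}^*\rangle$ against the biorthogonal system, so that their simultaneous vanishing forces the reconstructed vector to be $0$.
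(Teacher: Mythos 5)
Your proof is correct and is precisely the argument the paper intends: the corollary is stated as an immediate consequence of the summation basis theorem, and the implicit reasoning is exactly yours, namely that any $x\in M_\Lambda^2$ orthogonal to all $e_{\lambda_n}^*$ has vanishing coefficients in the summation formula $x=\lim_{k\to\infty}\sum_{n\geq 1}a_{n,k}\langle x,e_{\lambda_n}^*\rangle e_{\lambda_n}$, hence $x=0$, which by the standard orthogonal-complement characterization gives completeness of $(e_{\lambda_n}^*)_{n\geq 1}$.
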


\subsection{From M\"untz spaces to model spaces}

In the following, we revisit the known inequalities of Markov-Newman type to get some new ones in the framework of model spaces. In this spirit, there are many of them but we choose to  mention the following immediate consequence of Theorem 3.4. of \cite{BEZ}.

\begin{theorem}
Let  $\dis(w_n)_{n\geq 1}\subset \mathbb C_{0}$.

Then, for every finite sequence of complex numbers $(a_k)_{1\le k\le n}$
$$\Big\|\sum_{k=1}^n a_k \frac{w_k-\frac{1}{2}}{z+w_k} \Big\|_{{\mathcal H}^2}\le \Bigg[ \dis\sum_{k=1}^n\big|w_k-\frac{1}{2}\big|^2+\sum_{k=1}^n\Big(\Re(w_k)  \sum_{j=k+1}^n\Re(w_j)\Big) \Bigg]^\frac{1}{2} \Big\|\sum_{k=1}^n a_k  \frac{1}{z+w_k}  \Big\|_{{\mathcal H}^2}$$

\end{theorem}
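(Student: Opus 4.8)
The plan is to transport the asserted inequality back to the Müntz side through the isometry $\mathcal D$ of Theorem~\ref{dico}, where it becomes a known $L^2$ Newman-type inequality, and then to invoke \cite[Theorem 3.4]{BEZ}. Since each $w_k\in\mathbb C_0$, the number $\lambda_k:=w_k-\frac12$ lies in $\mathbb C_{-\frac12}$, so the monomial $\e_{\lambda_k}$ belongs to $L^2([0,1],dx)$ and, by \eqref{eq:monome-nr}, satisfies $\mathcal D(\e_{\lambda_k})(z)=\frac{1}{z+w_k}$. Writing $f=\sum_{k=1}^n a_k\e_{\lambda_k}\in M_\Lambda$, linearity of $\mathcal D$ gives $\mathcal D(f)(z)=\sum_{k=1}^n a_k\frac{1}{z+w_k}$, which is exactly the function appearing on the right-hand side of the statement.

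First I would identify the left-hand side as the image under $\mathcal D$ of a derivative. The crucial observation is that each monomial satisfies $x\,\e_{\lambda_k}'(x)=\lambda_k\e_{\lambda_k}(x)$, so that $xf'=\sum_{k=1}^n a_k\lambda_k\e_{\lambda_k}=\sum_{k=1}^n a_k\big(w_k-\tfrac12\big)\e_{\lambda_k}$ is again a finite combination of the $\e_{\lambda_k}$ and hence lies in $M_\Lambda\subset L^2([0,1],dx)$. Applying $\mathcal D$ and \eqref{eq:monome-nr} once more gives $\mathcal D(xf')(z)=\sum_{k=1}^n a_k\frac{w_k-\frac12}{z+w_k}$, the function on the left-hand side. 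One must work with $xf'$ rather than $f'$ here, since when $\Re(\lambda_k)\le\frac12$ the derivative $\e_{\lambda_k}'$ need not be square-integrable on $[0,1]$, whereas the map $f\mapsto xf'$ preserves $M_\Lambda$.

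Since $\mathcal D$ is an isometry, both occurrences of $\|\cdot\|_{\mathcal H^2}$ transfer to $\|\cdot\|_{L^2([0,1],dx)}$: the left-hand side equals $\|xf'\|_{L^2([0,1],dx)}$ and the inner norm on the right equals $\|f\|_{L^2([0,1],dx)}$. Thus the asserted estimate is equivalent to the Müntz-side inequality $\|xf'\|_{L^2}\le C\,\|f\|_{L^2}$ with $C^2=\sum_{k=1}^n|\lambda_k|^2+\sum_{k=1}^n\Re(w_k)\sum_{j=k+1}^n\Re(w_j)$, and after the substitution $\lambda_k=w_k-\frac12$ (so that $|w_k-\frac12|^2=|\lambda_k|^2$ and $\Re(w_k)=\Re(\lambda_k)+\frac12$) this is precisely \cite[Theorem 3.4]{BEZ} applied to the exponents $(\lambda_k)$.

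The only real point to check is that the constant furnished by \cite[Theorem 3.4]{BEZ} matches, after the shift $\lambda_k=w_k-\frac12$, the constant written in the statement, and that the hypotheses of that theorem (which must allow exponents in $\mathbb C_{-\frac12}$, not merely nonnegative real ones) are met by the $\lambda_k$; both are routine bookkeeping. I expect this matching of constants to be the main, though minor, obstacle, the conceptual content being entirely carried by the dictionary $\mathcal D$.
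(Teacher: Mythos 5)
Your proposal is correct and is essentially the paper's own proof: the paper states this theorem as an ``immediate consequence of Theorem 3.4 of \cite{BEZ}'', and the implicit mechanism is exactly your argument --- apply the isometry $\mathcal D$ with $\lambda_k=w_k-\frac12$, identify the two sides with $\|xf'\|_{L^2([0,1])}$ and $\|f\|_{L^2([0,1])}$ for $f=\sum_k a_k\e_{\lambda_k}$, and invoke the $L^2$ Markov--Newman inequality of Borwein--Erd\'elyi--Zhang (whose exponents may indeed be complex with real part greater than $-\frac12$, and whose constant matches after the shift since $1+2\Re(\lambda_k)=2\Re(w_k)$). The only difference is that you spell out the bookkeeping the paper leaves implicit.
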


Let us mention a particular  consequence, which is also a simple traduction of the Markov-Newman inequality: assume that the $w_k\ge\frac{1}{2}$ are real numbers, then, for every finite sequence of complex numbers $(a_k)_{1\le k\le n}$:

$$\Big\|\sum_{k=1}^n a_k\frac{w_k-\frac{1}{2}}{z+w_k} \Big\|_{{\mathcal H}^2}\le\sqrt2\Big[ \dis\sum_{k=1}^n w_k\Big] \Big\|\sum_{k=1}^n\frac{a_k }{z+w_k}\Big\|_{{\mathcal H}^2}$$

\bigskip

\noindent{\it A short and elementary proof of Volberg's theorem in the real case}. 
\medskip

In the particular case of real exponents, the proof of Corollary \ref{volb} proposed in \cite{GL} is elementary. We wish to propose here a new proof of the difficult result of Volberg in this particular case. The argument below does not use our dictionary,  nevertheless it clearly follows from the spirit of this paper: it is the simple traduction of the proof for Müntz spaces in \cite{GL} to model spaces.

The statement in this case reads as follows: the normalized reproducing kernels associated to an increasing sequence of positive real numbers $(w_n)$ forms an asymptotic orthonormal system if and only if $\dis \frac{w_{n+1}}{w_n}\rightarrow+\infty$.\medskip

Let us fix a super-lacunary sequence of real numbers $(w_n)$, which means  that $\dis \frac{w_{n+1}}{w_n}\rightarrow+\infty$. Equivalently $q_n=\big(2w_n\big)^\frac{1}{2}$ is super-lacunary.

Now take any finitely supported sequence of scalars $(a_k)$ and develop
$$\begin{array}{ccl}
\dis\Big\|\sum_{k\ge n} a_k \frac{\big(2w_k \big)^\frac{1}{2}}{z+w_k} \Big\|^2_{{\mathcal H}^2}&=&\dis\sum_{k,l\ge n} a_k\overline{ a_l}q_kq_l\big\langle\frac{1}{z+w_k},\frac{1}{z+w_l}\big\rangle\cr
&=&\dis\sum_{k,l\ge n} 2a_k\overline{ a_l}\frac{q_kq_l}{q_k^2+q_l^2}\cr 
&=&\dis\|a\|_2^2+\biindice{\sum}{k,l\ge n}{k\neq l} 2a_k\overline{ a_l}\frac{q_kq_l}{q_k^2+q_l^2}\qquad(\ast)\cr 

\end{array}
$$

But $\dis \big| 2a_k\overline{ a_l}\big|\le |a_k|^2+|a_l|^2$ so that 
$$\Bigg|\biindice{\sum}{k,l\ge n}{k\neq l} 2a_k\overline{ a_l}\frac{q_kq_l}{q_k^2+q_l^2}\Bigg|\le2\|a\|_2^2\sup_{l\ge n}\dis\biindice{\sum}{k\ge n}{k\neq l}\frac{q_kq_l}{q_k^2+q_l^2}\,\cdot$$
Since for every $l\ge n$, we have 
$$\biindice{\sum}{k\ge n}{k\neq l}\frac{q_kq_l}{q_k^2+q_l^2}=2\sum_{k>l\ge n}\frac{q_kq_l}{q_k^2+q_l^2}\le2\sum_{k>l\ge n}\frac{q_l}{q_k}\le2\frac{1}{r_n-1}\; , $$
where $\dis r_n=\inf_{m\ge n}\frac{q_{m+1}}{q_m}\rightarrow\infty$.

Hence $$\Bigg|\biindice{\sum}{k,l\ge n}{k\neq l} 2a_k\overline{ a_l}\frac{q_kq_l}{q_k^2+q_l^2}\Bigg|\le4\frac{1}{r_n-1}\|a\|_2^2\,\cdot\qquad(\ast\ast)$$

Writing $\dis1+\eps_n=\Big(1+4\frac{1}{r_n-1}\Big)^\frac{1}{2}$, we get the majorization:

$$\dis\Big\|\sum_{k\ge n} a_k \frac{\big(2w_k \big)^\frac{1}{2}}{z+w_k} \Big\|_{{\mathcal H}^2}\le(1+\eps_n)\|a\|_2.$$

But using again $(\ast)$ and $(\ast\ast)$, we have

$$\Big\|\sum_{k\ge n} a_k \frac{\big(2w_k \big)^\frac{1}{2}}{z+w_k} \Big\|^2_{{\mathcal H}^2}\ge\Big(1-4\frac{1}{r_n-1}\Big)\|a\|_2^2$$

and we get $\dis\inf_{\|a\|_2=1}\Big\|\sum_{k\ge n} a_k\frac{\big(2w_k \big)^\frac{1}{2}}{z+w_k}  \Big\|_{{\mathcal H}^2}\ge(1-2\eps_n-\eps_n^2)^\frac{1}{2}\longrightarrow1\, .$

The "if" part of the statement follows.
\medskip

The necessary condition is easy to get:  for every $t>0$, we have

$$(1+\eps_n)(1+t^2)^\frac{1}{2}\ge\Big\|\frac{\big(2w_n \big)^\frac{1}{2}}{z+w_n} + t\frac{\big(2w_{n+1} \big)^\frac{1}{2}}{z+w_{n+1}} \Big\|_{{\mathcal H}^2}\ge\Big\langle\frac{\big(2w_n \big)^\frac{1}{2}}{z+w_n} + t\frac{\big(2w_{n+1} \big)^\frac{1}{2}}{z+w_{n+1}}, \frac{\big(2w_n \big)^\frac{1}{2}}{z+w_n}  \Big\rangle  $$
which gives $$\dis(1+\eps_n)(1+t^2)^\frac{1}{2}\ge 1+2t \frac{\sqrt{\frac{w_{n+1}}{w_n}}}{1+\frac{w_{n+1}}{w_n}}$$
and this forces  $\dis \frac{w_{n+1}}{w_n}\rightarrow+\infty$.
\hfill $\square$

Actually, the same results holds (with quite the same proof) if we assume that  $(w_n)$ is a decreasing sequence of positive real numbers: in that case, the condition reads as $\dis \frac{w_{n+1}}{w_n}\rightarrow0$.\medskip
\bigskip


\end{document}